\definecolor{mygray}{gray}{.9}
\newtheorem{theorem}{Theorem}[section]
\newtheorem{remark}[theorem]{Remark}
\theoremstyle{nonumberplain}
\numberwithin{equation}{section} 
\begin{document}
\title{ \large \textbf{ Last-iterate convergence of modified 
predictive method via high-resolution differential equation on bilinear game}}
\author{\footnotesize     Keke Li$^{a,b}$, 
	Xinmin 
Yang$^{b}$\thanks{Corresponding author.\textit{E-mail addresses}:  xmyang@cqnu.edu.cn}\\
\textit{$^{a}$}{ \footnotesize  School of Mathematical Sciences, University of Electronic Science and Technology of China, 
}\\
\textit{}{ \footnotesize   
	Chengdu 611731, China
}\\
\textit{$^{b}$}{ \footnotesize School of Mathematics Science, 
Chongqing Normal University, Chongqing 400047, China
}}
\date{}

\maketitle
 {
 {\textbf{Abstract:} 
 	This paper discusses the convergence of the modified predictive method (MPM) proposed by Liang and stokes corresponding to high-resolution differential equations (HRDE) in bilinear games. First, we present the high-resolution differential equations (MPM-HRDE) corresponding to the MPM. Then, we discuss the uniqueness of the solution for MPM-HRDE in bilinear games. Finally, we provide the convergence results of MPM-HRDE in bilinear games. The results obtained in this paper address the gap in the existing literature and extend the conclusions of related works.
}
\indent
\par
\footnotesize {\textbf{Key words:} Bilinear game, modified predictive method, high-resolution differential equation,  last-iterate convergence }
}
\indent
\par
\small{\textbf{Mathematics Subject Classification:}   65L20, 90C30, 90C47.}
\section{Introduction}
\indent
\par

Gradient-based optimization algorithms can be understood from the perspective of ordinary differential equations (ODEs). Significant progress has been made in using ODEs to understand discrete-time algorithms (DTAs), please see references \cite{jordan2018dynamical, wibisono2016variational, wilson2021lyapunov,  su2016differential,  shi2018understanding, shi2022understanding, shi2019acceleration,  yuan2023analysis}. Understanding DTA from the perspective of ODEs has two main advantages: the convergence analysis of ODEs is usually more straightforward than that of DTAs, and the advanced analytical tools from the ODE literature can help provide more fundamental insights into the behavior of DTAs \cite{lu2022sr}.
However, relative to the significant role played by continuous-time analysis in single-objective optimization, the advantages of continuous-time analysis have not been fully utilized in min-max optimization problems \cite{chavdarova2021last, chavdarova2022continuous, chavdarova2023last}, such as those represented by generative adversarial networks (GAN) \cite{goodfellow2014generative}.

In fact, some first-order saddle point discrete optimization algorithms, such as the gradient descent ascent algorithm (GDA), the extragradient method (EG), and the optimistic gradient descent ascent algorithm (OGDA) mentioned in references \cite{chavdarova2021last, chavdarova2023last}, exhibit different convergence behaviors in simple bilinear games (a specific instance of GANs). However, in the limit as the step size tends to zero, they all lead to the same ODE. Therefore, a general ODE framework cannot distinguish between these different discrete algorithms.
To leverage the advantages of ODEs in the analysis of discretization algorithms in min-max problems, establishing different correspondences between DTA and their respective ODE convergence behaviors can assist in analyzing the convergence of DTAs using ODEs. Based on research results on ODEs in numerical optimization, Lu \cite{lu2022sr} proposed an $O(s^r)$-resolution ODE framework to analyze the behavior of general DTAs.
Inspired by the research ideas presented in \cite{lu2022sr}, to enhance the distinguishability between ODEs corresponding to different DTAs in min-max problems, Chavdarova et al. \cite{chavdarova2021last, chavdarova2023last} introduced high-resolution differential equations (HRDE) to design differential equation models for min-max problems. They established the convergence of GDA, EG, OGDA, and lookahead  methods (LA) corresponding to HRDE on bilinear games.

Note that the literature \cite{chavdarova2021last, chavdarovalast2021OPT, chavdarova2023last} discusses the convergence of the high-resolution differential equation  corresponding to the extragradient method in bilinear  games. However, the convergence of the high-resolution differential equation  corresponding to the modified predictive method has not been discussed. Therefore, one of the main motivations of this paper is to discuss the convergence of the high-resolution differential equations corresponding to the the modified predictive method in bilinear games.
On the other hand, it is noted that references \cite{chavdarova2021last, chavdarova2023last} have shortcomings in proving the uniqueness of the MPM-HRDE solution in bilinear games and impose overly strong constraints on matrix $\boldsymbol{A}$ when proving convergence. Therefore, addressing these shortcomings in the proof process and relaxing these conditions is another primary motivation of this paper. 

\section{Preliminaries}
\indent
\par

Unless otherwise stated, the notation used in this paper has the same meaning as in references \cite{chavdarova2021last, chavdarova2023last}. Lowercase bold letters represent vectors, 
$\mathfrak{R}(c)$ and 
$\mathfrak{I}(c)$ denote the real and imaginary parts of a complex number 
$c=\mathfrak{R}(c)+ \mathfrak{I}(c) \cdot i \in \mathbb{C}$, respectively.  
$\| \boldsymbol{x} \|_2$ and 
$\| \boldsymbol{A} \|_{\rm F}$ represent the Euclidean norm of a vector and the Frobenius norm of a matrix $\boldsymbol{A}$, respectively.

\subsection{Modified predictive method}
\indent
\par
In this subsection, we introduce several notations that will be used later and the iterative format of the modified predictive algorithm primarily discussed in this paper.
Denote $\boldsymbol{z}  \triangleq (\boldsymbol{x}, \boldsymbol{y}) \in \mathbb{R}^{d}$, where $d = d_1 + d_2 $, and define the vector field $V: \mathbb{R}^{d} \rightarrow \mathbb{R}^{d}$ and its Jacobian $J$ as follows:
\begin{equation*}
	V(\boldsymbol{z})=\left[\begin{array}{c}
		\nabla_x f(\boldsymbol{z}) \\
		-\nabla_y f(\boldsymbol{z})
	\end{array}\right], \quad J(\boldsymbol{z})=\left[\begin{array}{cc}
		\nabla_x^2 f(\boldsymbol{z}) & \nabla_y \nabla_x f(\boldsymbol{z}) \\
		-\nabla_x \nabla_y f(\boldsymbol{z}) & -\nabla_y^2 f(\boldsymbol{z})
	\end{array}\right] .
\end{equation*}

The modified predictive method (MPM) mentioned in references \cite{liang2019interaction, li2024, li2024Alleviating} is a variant of the EG. The idea is to obtain gradient information at an extrapolated point $\boldsymbol{z}_{t+\frac{1}{2}}$ using a prediction step $\boldsymbol{z}_{n+\frac{1}{2}}=\boldsymbol{z}_n-\alpha 
V\left(\boldsymbol{z}_n\right)$, and then update at the current point $\boldsymbol{z}_{n}$. The specific iterative format is as follows:
 
\begin{equation}
	\boldsymbol{z}_{n+\frac{1}{2}}=\boldsymbol{z}_n-\alpha 
	V\left(\boldsymbol{z}_n\right), \quad
	\boldsymbol{z}_{n+1}=\boldsymbol{z}_n-\gamma
	V(\boldsymbol{z}_{n+\frac{1}{2}} ). \tag{MPM}
\end{equation}

\subsection{High-resolution differential equations of modified predictive method}
\indent
\par

In this subsection, we turn to deriving the $\mathcal{O}(\alpha)$-ODEs for modified predictive method. 
The specific derivation process of MPM corresponding to MPM-HRDE can be referenced from the construction process of EG corresponding to EG-HRDE in the literature \cite{chavdarova2021last,  chavdarova2023last}. By combining the iterative format of MPM with equation (3) from the references \cite{chavdarova2021last, chavdarova2023last}, we can obtain:
\begin{equation*}
	(\dot{\boldsymbol{z}}(n \delta) \delta+\frac{1}{2} \ddot{\boldsymbol{z}}(n \delta) \delta^2) / \gamma=-V(\boldsymbol{z}(n \delta))+\alpha J(\boldsymbol{z}(n \delta)) V(\boldsymbol{z}(n \delta))+\mathcal{O}\left(\alpha^2\right) .
\end{equation*}
Then, setting $\delta=\gamma$ and keeping the $\mathcal{O}(\alpha)$ terms we obtain:
\begin{equation*}
	\dot{\boldsymbol{z}}(t)+\frac{\gamma}{2} \ddot{\boldsymbol{z}}(t)=-V(\boldsymbol{z}(t))+\alpha \cdot J(\boldsymbol{z}(t)) \cdot V(\boldsymbol{z}(t)),
\end{equation*}
which yielding
\begin{equation*} 
	\begin{aligned}
		& \dot{\boldsymbol{z}}(t)=\boldsymbol{\omega}(t) \\
		& \dot{\boldsymbol{\omega}}(t)=-\beta \cdot 
		\boldsymbol{\omega}(t)-\beta \cdot V(\boldsymbol{z}(t))+ 
		\alpha \beta 
		\cdot J(\boldsymbol{z}(t)) \cdot V(\boldsymbol{z}(t)), \text 
		{ where } \beta=\frac{2}{\gamma} .
	\end{aligned} \tag{MPM-HRDE}
\end{equation*}

It is evident that in MPM-HRDE, if 
$\alpha = \gamma$, then MPM-HRDE will degenerate to EG-HRDE as mentioned in the literature \cite{chavdarova2021last, chavdarova2023last}. Therefore, this construction method of high-resolution ODE maintains the degenerate relationship of parameter values among discrete algorithms.

\section{Convergence of MPM-HRDE on bilinear game}
\indent
\par
In this section, we provide the proofs of the convergence results on 
the bilinear game (BG). As a reminder, we focus on the 
following problem with full rank $\boldsymbol{A} \in \mathbb{R}^{d_1} 
\times \mathbb{R}^{d_2}$
\begin{equation} \label{BG}
	\min _{\boldsymbol{x} \in \mathbb{R}^{d_1}} \max _{\boldsymbol{y} 
		\in \mathbb{R}^{d_2}} \boldsymbol{x}^{\boldsymbol{\top}} 
	\boldsymbol{A} \boldsymbol{y}
\end{equation}
The joint vector field of BG and its Jacobian are:
\begin{equation*} 
	V_{\mathrm{BG}}(\boldsymbol{z})=\left[\begin{array}{c}
		\boldsymbol{A} \boldsymbol{y} \\
		-\boldsymbol{A}^{\boldsymbol{\top}} \boldsymbol{x}
	\end{array}\right], \qquad
	J_{\mathrm{BG}}(\boldsymbol{z})=\left[\begin{array}{cc}
		0 & \boldsymbol{A} \\
		-\boldsymbol{A}^{\boldsymbol{\top}} & 0
	\end{array}\right].
\end{equation*}

By replacing the above in the derived HRDEs one can analyze the 
convergence of the modified predictive method on the bilinear game.
Moreover, as the obtained system is linear this can be done using 
standard tools from dynamical systems, without designing Lyapunov 
functions. Using the Routh Hurwitz criteria, the 
Theorems below state the convergence results for the modified 
predictive method. 

\begin{theorem}[Unique solution of MPM-HRDE] \label{theorem1}
MPM-HRDE has a unique
solution when applied to bilinear game \eqref{BG}. 
\end{theorem}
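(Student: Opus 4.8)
The plan is to exploit the linearity of the bilinear game to collapse MPM-HRDE into an autonomous linear system with constant coefficients, for which existence and uniqueness of a global solution are classical. First I would substitute $V_{\mathrm{BG}}$ and $J_{\mathrm{BG}}$ into MPM-HRDE. The crucial observation is that $V_{\mathrm{BG}}(\boldsymbol{z}) = J_{\mathrm{BG}}\boldsymbol{z}$ and that $J_{\mathrm{BG}}$ does not depend on $\boldsymbol{z}$, so $J_{\mathrm{BG}}(\boldsymbol{z})V_{\mathrm{BG}}(\boldsymbol{z}) = J_{\mathrm{BG}}^{2}\boldsymbol{z}$ with $J_{\mathrm{BG}}^{2} = \mathrm{diag}\!\left(-\boldsymbol{A}\boldsymbol{A}^{\top},\,-\boldsymbol{A}^{\top}\boldsymbol{A}\right)$. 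Writing $\boldsymbol{u}(t) = (\boldsymbol{z}(t),\boldsymbol{\omega}(t)) \in \mathbb{R}^{2d}$, MPM-HRDE on \eqref{BG} then takes the form $\dot{\boldsymbol{u}}(t) = \boldsymbol{M}\boldsymbol{u}(t)$ with the fixed $2d\times 2d$ matrix
\[
\boldsymbol{M} = \begin{bmatrix} \boldsymbol{0} & \boldsymbol{I}\\ -\beta J_{\mathrm{BG}} + \alpha\beta J_{\mathrm{BG}}^{2} & -\beta\boldsymbol{I}\end{bmatrix}.
\]

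Since the map $\boldsymbol{u}\mapsto \boldsymbol{M}\boldsymbol{u}$ is linear, it is globally Lipschitz on $\mathbb{R}^{2d}$ with constant $\|\boldsymbol{M}\|$. Hence, by the Picard--Lindel\"{o}f (Cauchy--Lipschitz) theorem, for every initial datum $\boldsymbol{u}(0) = (\boldsymbol{z}_{0},\boldsymbol{\omega}_{0})$ there is a unique local solution; and because the Lipschitz bound is global, the maximal solution extends to all $t \ge 0$ (indeed to all of $\mathbb{R}$) and is given explicitly by $\boldsymbol{u}(t) = e^{\boldsymbol{M}t}\boldsymbol{u}(0)$. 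This is precisely the assertion of Theorem~\ref{theorem1}.

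I do not expect a genuine obstacle here; the substance of the argument is recognizing that the bilinear structure is what makes it work. For a general objective $f$ the term $J(\boldsymbol{z})V(\boldsymbol{z})$ is quadratic in $\boldsymbol{z}$, so MPM-HRDE is only locally Lipschitz and one obtains merely local-in-time uniqueness unless additional a priori estimates are supplied --- which is exactly where the earlier treatments are incomplete. Accordingly, in the write-up I would make the reduction to $\dot{\boldsymbol{u}} = \boldsymbol{M}\boldsymbol{u}$ fully explicit and state clearly that it is the \emph{global} Lipschitz continuity of the right-hand side, not merely its continuity or local Lipschitz continuity, that is being invoked, thereby closing the gap pointed out in the introduction.
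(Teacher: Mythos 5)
Your proof is correct, and it reaches the same destination as the paper's --- global Lipschitz continuity of the right-hand side, hence a unique global solution --- but by a cleaner and more elementary route. The paper keeps the vector field in the general form $\mathbf{G}(\boldsymbol{z},\boldsymbol{\omega})=(\boldsymbol{\omega},\,-\beta\boldsymbol{\omega}-\beta V(\boldsymbol{z})+\alpha\beta J(\boldsymbol{z})V(\boldsymbol{z}))$ and runs a term-by-term Lipschitz estimate, splitting $J(\boldsymbol{z}_1)V(\boldsymbol{z}_1)-J(\boldsymbol{z}_2)V(\boldsymbol{z}_2)$ into a piece controlled by $\|J(\boldsymbol{z}_1)\|_{\rm F}\le\sqrt{2}\|\boldsymbol{A}\|_{\rm F}$ and a piece that vanishes because $J_{\mathrm{BG}}$ is constant; it then invokes Theorem 10 of \cite{shi2018understanding}. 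Along the way it posits a priori bounds $\sup_t\|\boldsymbol{\omega}(t)\|\le\mathcal{C}_1$, $\sup_t\|\dot{\boldsymbol{\omega}}(t)\|\le\mathcal{C}_2$ and uses $\|V(\boldsymbol{z}_2)\|_2\le\mathcal{C}_1$ to handle the (identically zero) term $(J(\boldsymbol{z}_1)-J(\boldsymbol{z}_2))V(\boldsymbol{z}_2)$ --- assumptions your argument never needs, since you observe at the outset that $V_{\mathrm{BG}}(\boldsymbol{z})=J_{\mathrm{BG}}\boldsymbol{z}$ with $J_{\mathrm{BG}}$ constant, so the whole system is the autonomous linear ODE $\dot{\boldsymbol{u}}=\boldsymbol{M}\boldsymbol{u}$ and uniqueness follows from Picard--Lindel\"{o}f with the explicit solution $e^{\boldsymbol{M}t}\boldsymbol{u}(0)$. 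Your matrix $\boldsymbol{M}$ is exactly the matrix $\boldsymbol{C}_{\mathrm{MPM}}$ that the paper introduces later in Theorem \ref{TH1} (the lower-left block $-\beta J_{\mathrm{BG}}+\alpha\beta J_{\mathrm{BG}}^2$ is precisely $\bigl[\begin{smallmatrix}-\alpha\beta\boldsymbol{A}\boldsymbol{A}^{\top} & -\beta\boldsymbol{A}\\ \beta\boldsymbol{A}^{\top} & -\alpha\beta\boldsymbol{A}^{\top}\boldsymbol{A}\end{smallmatrix}\bigr]$), so your version also unifies the uniqueness proof with the convergence analysis. What the paper's more laborious estimate buys is a template that would survive for non-bilinear $f$ with bounded Hessian, where the system is no longer linear; what your version buys is the removal of the superfluous boundedness hypotheses and an explicit, self-contained argument. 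Your closing caveat --- that for general $f$ the term $J(\boldsymbol{z})V(\boldsymbol{z})$ is only locally Lipschitz --- is exactly the gap the paper's remarks gesture at in the earlier literature.
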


\begin{proof}

The differential equations for MPM take the form $\dot{\boldsymbol{u}}=\mathbf{G}(\boldsymbol{u})$, where $\boldsymbol{u}=(\boldsymbol{z}, \boldsymbol{\omega})$ and $\mathbf{G}$ is the vector field defined in (MPM-HRDE). If we can demonstrate that $\mathbf{G}$ is Lipschitz continuous, then we can apply Theorem 10 from \cite{shi2018understanding}, leading to Theorem \ref{theorem1}.
We may assume that $\boldsymbol{\omega}(t)$ and $\dot{\boldsymbol{\omega}}(t)$ are bounded, namely,
\begin{equation*}
\sup \limits_{0 \leq t \leq \infty}\|\boldsymbol{\omega}(t)\| \leq \mathcal{C}_1, \quad
\sup \limits_{0 \leq t \leq \infty}\|\dot{\boldsymbol{\omega}}(t)\| \leq \mathcal{C}_2.
\end{equation*}
Now, we will demonstrate the Lipschitz continuity of $\mathbf{G}$. Derived from MPM, we have:
\begin{equation*}
	\frac{d}{d t}\left[\begin{array}{l}
		\boldsymbol{z}_s \\
		\boldsymbol{\omega}_s
	\end{array}\right]=\left[\begin{array}{c}
		\boldsymbol{\omega}_s \\
		-\frac{2}{\gamma} \boldsymbol{\omega}_s-\frac{2}{\gamma} V\left(\boldsymbol{z}_s\right) + \frac{2\alpha}{\gamma} J\left(\boldsymbol{z}_s\right) V\left(\boldsymbol{z}_s\right)
	\end{array}\right].
\end{equation*}
Hence, for any $[\boldsymbol{z}_1, \boldsymbol{\omega}_1], [\boldsymbol{z}_2, \boldsymbol{\omega}_2]$  with the initial condition bounds on the norm of
$\boldsymbol{z}_1, \boldsymbol{z}_2, \boldsymbol{\omega}_1, \boldsymbol{\omega}_2$, we obtain:
\begin{equation*}
	\begin{aligned}
		& \left\|\left[\begin{array}{c}
			\boldsymbol{\omega}_1 \\
			-\frac{2}{\gamma} \boldsymbol{\omega}_1-\frac{2}{\gamma} V\left(\boldsymbol{z}_1\right)+\frac{2\alpha}{\gamma} J\left(\boldsymbol{z}_1\right) V\left(\boldsymbol{z}_1\right)
		\end{array}\right]-\left[\begin{array}{c}
			\boldsymbol{\omega}_2 \\
			-\frac{2}{\gamma} \boldsymbol{\omega}_2-\frac{2}{\gamma} V\left(\boldsymbol{z}_2\right)+ \frac{2\alpha}{\gamma} J\left(\boldsymbol{z}_2\right) V\left(\boldsymbol{z}_2\right)
		\end{array}\right]\right\|_2 \\
		& \leq\left\|\begin{array}{c}
			\boldsymbol{\omega}_1-\boldsymbol{\omega}_2 \\
			-\frac{2}{\gamma}\left(\boldsymbol{\omega}_1-\boldsymbol{\omega}_2\right)
		\end{array}\right\|_2+ 
	   \frac{2}{\gamma}\left\|\begin{array}{c}
		0 \\
		 \left(V\left(\boldsymbol{z}_1\right)- V\left(\boldsymbol{z}_2\right)\right)
	\end{array}\right\|_2   +  
   \frac{2\alpha}{\gamma}\left\|\begin{array}{c}
0 \\
J\left(\boldsymbol{z}_1\right) \left(V\left(\boldsymbol{z}_1\right)- V\left(\boldsymbol{z}_2\right)\right)
\end{array}\right\|_2\\
		& \quad+\frac{2\alpha}{\gamma}\left\|\begin{array}{c}
			0 \\
			V\left(\boldsymbol{z}_2\right) \left(J\left(\boldsymbol{z}_1\right)-J\left(\boldsymbol{z}_2\right)\right)
		\end{array}\right\|_2  \\
	&	\leq  \sqrt{1+\frac{4}{\gamma^2}}\left\|\boldsymbol{\omega}_1-\boldsymbol{\omega}_2\right\|_2+\frac{2}{\gamma} L_1\left\|\boldsymbol{z}_1-\boldsymbol{z}_2\right\|_2+ \frac{2\alpha}{\gamma} \underbrace{\left\|J\left(\boldsymbol{z}_1\right)\right\|_{\rm F}}_{\leq \sqrt{2}\left\|A\right\|_{\rm F}} \left\| V\left(\boldsymbol{z}_1\right)- V\left(\boldsymbol{z}_2\right)\right\|_2 \\
		& \quad +\frac{2\alpha}{\gamma} \underbrace{\left\|V\left(\boldsymbol{z}_2\right)\right\|_2}_{\leq \mathcal{C}_1} \underbrace{\left\|J\left(\boldsymbol{z}_1\right)-J\left(\boldsymbol{z}_2\right)\right\|_{\rm F}}_{=0} \\
		& \leq \left(\frac{2}{\gamma} L_1\right)\left\|\boldsymbol{z}_1-\boldsymbol{z}_2\right\|_2+\left(\sqrt{1+\frac{4}{\gamma^2}}+ \frac{2\sqrt{2} \alpha}{\gamma}\left\|A\right\|_{\rm F} \right)\left\|\boldsymbol{\omega}_1-\boldsymbol{\omega}_2\right\|_2 \\
		& \leq \sqrt{2} \max \left\{\frac{2}{\gamma} L_1, \sqrt{1+\frac{4}{\gamma^2}}+\frac{2\sqrt{2} \alpha}{\gamma}\left\|A\right\|_{\rm F}\right\}\left\|\left[\begin{array}{c}
			\boldsymbol{z}_1 \\
			\boldsymbol{\omega}_1
		\end{array}\right]-\left[\begin{array}{c}
			\boldsymbol{z}_2 \\
			\boldsymbol{\omega}_2
		\end{array}\right]\right\|_2,
	\end{aligned}
\end{equation*}
In conclusion, we can apply Theorem 10 from \cite{shi2018understanding}, leading to the conclusion of Theorem \ref{theorem1}. 
The proof is completed.
\end{proof}

\begin{remark}
In the proof of the uniqueness of HRDE solutions in binary non-cooperative games presented in Section B.3 of reference \cite{chavdarova2023last} and A.3 of reference \cite{chavdarova2021last}, the global Lipschitz constant may contain information about the norm of the variable $\boldsymbol{z}$. Since the binary linear game 
$J_{BG}(\boldsymbol{z})$ is constant, our scaling may be more reasonable.
\end{remark}

\begin{remark}
In Section B.3 of reference \cite{chavdarova2023last} and Section A.3 of reference \cite{chavdarova2021last}, when discussing the unique solution of the HRDEs for binary linear games, note that $J_{BG}(\boldsymbol{z}_1)$
is a constant value. It is difficult to achieve its norm being smaller than the Lipschitz constant multiplied by the norm of any 
$ \boldsymbol{z}_1$. Therefore, it is more reasonable to directly compute its Frobenius norm.
\end{remark}

\begin{theorem}\label{TH1}  For 
 $\alpha, \gamma >0$, 
the real part of the
eigenvalues of $\boldsymbol{C}_{MPM}$ is always negative, 
$\mathfrak{R}\left(\lambda_i\right)<0, \forall \lambda_i \in { S 
p}\left(\boldsymbol{C}_{MPM}\right)$, 
thus the $\mathcal{O}(\alpha)$-HRDE of the modified predictive method (MPM-HRDE) dynamics converges on the BG problem with $\alpha > 2 \gamma $.

\end{theorem}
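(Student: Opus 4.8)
The plan is to reduce the claim to an eigenvalue computation and then to a Routh--Hurwitz argument for a complex-coefficient quadratic. Substituting $V_{\mathrm{BG}}(\boldsymbol z)=J_{\mathrm{BG}}\boldsymbol z$ (valid in the bilinear game) and the constant Jacobian $J_{\mathrm{BG}}$ into MPM-HRDE turns the dynamics into the linear autonomous system $\dot{\boldsymbol u}=\boldsymbol C_{MPM}\boldsymbol u$, $\boldsymbol u=(\boldsymbol z,\boldsymbol\omega)$, with
\[
\boldsymbol C_{MPM}=\begin{pmatrix} 0 & I \\ \beta(\alpha J_{\mathrm{BG}}^{2}-J_{\mathrm{BG}}) & -\beta I\end{pmatrix},\qquad \beta=\frac{2}{\gamma}.
\]
Once we know that $\mathfrak{R}(\lambda_i)<0$ for every $\lambda_i\in \mathrm{Sp}(\boldsymbol C_{MPM})$, the matrix exponential $e^{t\boldsymbol C_{MPM}}$ decays to $0$, so every trajectory $(\boldsymbol z(t),\boldsymbol\omega(t))$ tends to the origin, i.e.\ to the equilibrium $\boldsymbol z=\boldsymbol 0$ of \eqref{BG}; thus the convergence assertion follows from the spectral one, and it remains to establish the latter.

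To compute $\mathrm{Sp}(\boldsymbol C_{MPM})$ I would diagonalize $J_{\mathrm{BG}}$, which is real and skew-symmetric and hence unitarily diagonalizable with purely imaginary eigenvalues $\mu\in\{\pm i\sigma_k\}$, the $\sigma_k>0$ being the singular values of the full-rank matrix $\boldsymbol A$ (when $d_1\neq d_2$ there is in addition the eigenvalue $\mu=0$, which I would handle separately or exclude by restricting to the square, invertible case). As $\boldsymbol C_{MPM}$ is a block matrix built only from $I$ and polynomials in $J_{\mathrm{BG}}$, it decomposes into $2\times 2$ blocks over the eigenspaces of $J_{\mathrm{BG}}$, and on the $\mu$-block the characteristic polynomial is the scalar quadratic
\[
\lambda^{2}+\beta\lambda-\beta\alpha\mu^{2}+\beta\mu=0,\qquad\text{that is}\qquad \lambda^{2}+\beta\lambda+\alpha\beta\sigma_k^{2}\pm i\beta\sigma_k=0 .
\]
Hence it suffices to prove that, for every $\sigma=\sigma_k>0$ and either sign, both roots of $\lambda^{2}+\beta\lambda+(\alpha\beta\sigma^{2}\pm i\beta\sigma)$ lie in the open left half-plane.

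For this I would invoke the Routh--Hurwitz criterion for quadratics with complex coefficients: $\lambda^{2}+p_{1}\lambda+p_{0}$ has both roots in the open left half-plane if and only if $\mathfrak{R}(p_{1})>0$ and $\mathfrak{R}(p_{1})\,\mathfrak{R}(\overline{p_{1}}p_{0})>\bigl(\mathfrak{I}(p_{0})\bigr)^{2}$. Here $p_{1}=\beta>0$ and $p_{0}=\alpha\beta\sigma^{2}\pm i\beta\sigma$, so the first condition is automatic and the second becomes, after cancelling the common factor $\beta^{2}\sigma^{2}$, simply $\alpha\beta>1$; since $\beta=2/\gamma$ this reads $\alpha>\gamma/2$, which is in particular implied by the hypothesis $\alpha>2\gamma$. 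An alternative that avoids quoting the complex criterion is to put $\lambda=-\beta/2+w$, so that the two roots become $\pm w$ with $w^{2}=\beta^{2}/4-\alpha\beta\sigma^{2}\mp i\beta\sigma$, to observe that $\mathfrak{R}(\lambda)<0$ for both roots is equivalent to $\mathfrak{R}(w)^{2}<\beta^{2}/4$, and to evaluate this using $\mathfrak{R}(w)^{2}=\tfrac12\bigl(|w^{2}|+\mathfrak{R}(w^{2})\bigr)$. Applying either argument for all $\sigma_k$ yields $\mathfrak{R}(\lambda_i)<0$ throughout $\mathrm{Sp}(\boldsymbol C_{MPM})$, and the first paragraph then gives the convergence of MPM-HRDE on \eqref{BG}.

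The main obstacle I anticipate is precisely the root-location step for the complex-coefficient quadratic: the usual real Routh--Hurwitz device of separating $p(i\omega)$ into its real and imaginary parts is unavailable, so one must either justify the complex Routh--Hurwitz inequalities carefully (for instance by passing to the real quartic $p(\lambda)\,\overline{p(\bar\lambda)}$ and applying the classical criterion there) or carry the square-root estimate through cleanly. A secondary technical point is the kernel of $J_{\mathrm{BG}}$ when $\boldsymbol A$ is not square: there strict negativity of the whole spectrum fails and convergence must be understood as convergence to the equilibrium set rather than to a point.
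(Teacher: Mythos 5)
Your proposal is correct, and it follows the same overall skeleton as the paper --- reduce $\boldsymbol{C}_{MPM}$ to the $2\times 2$ block form $\bigl(\begin{smallmatrix}0 & I\\ \boldsymbol{D} & -\beta I\end{smallmatrix}\bigr)$, obtain the complex-coefficient quadratic $\lambda^2+\beta\lambda-\mu=0$, and apply a Routh--Hurwitz criterion for complex coefficients (the paper cites a ``generalized Hurwitz theorem'' reference for exactly the criterion you were worried about having to justify, so that anticipated obstacle is not an issue). Where you genuinely diverge is in how the eigenvalues $\mu$ of $\boldsymbol{D}=\beta(\alpha J_{\mathrm{BG}}^{2}-J_{\mathrm{BG}})$ are handled. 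The paper does not diagonalize $J_{\mathrm{BG}}$: it bounds the Rayleigh quotient $\mu(\boldsymbol z)=\overline{\boldsymbol z}^{\top}\boldsymbol D\boldsymbol z$ over \emph{all} unit vectors $\boldsymbol z$ (i.e.\ over the numerical range, which contains the spectrum) and then applies Cauchy--Schwarz, $|\overline{\boldsymbol x}^{\top}\boldsymbol A\boldsymbol y|^{2}\le\|\boldsymbol A\boldsymbol y\|_2^{2}\le\|\boldsymbol A^{\top}\boldsymbol x\|_2^{2}+\|\boldsymbol A\boldsymbol y\|_2^{2}$, arriving at the sufficient condition $\alpha\beta>4$, i.e.\ $\alpha>2\gamma$. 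You instead compute the spectrum of $\boldsymbol D$ exactly via the singular values of $\boldsymbol A$ ($\mu=\beta(-\alpha\sigma_k^{2}\mp i\sigma_k)$), and the stability inequality $\mu_1<-\mu_2^{2}/\beta^{2}$ collapses to $\alpha\beta>1$, i.e.\ $\alpha>\gamma/2$ --- a strictly sharper threshold (the factor $4$ the paper loses is exactly because on an actual eigenvector of $J_{\mathrm{BG}}$ one has $\|\boldsymbol x\|_2^{2}=\|\boldsymbol y\|_2^{2}=\tfrac12$, whereas the paper's worst case over the unit sphere does not respect this). So your route buys a stronger conclusion at the price of the (easy) unitary diagonalization step. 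Your caveat about $\ker J_{\mathrm{BG}}$ when $d_1\neq d_2$ is well taken: in that case $\boldsymbol C_{MPM}$ has a zero eigenvalue and the strict spectral claim fails, a point the paper's argument also silently skips (its strict inequality \eqref{eq44} degenerates to $0<0$ on such directions), so both proofs implicitly require $\boldsymbol A$ square and invertible.
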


\begin{proof}
Remember that for the MPM, we utilize the following equation (MPM-HRDE).
\begin{equation*} \label{MPM-HRDE}
	\begin{aligned}
		& \dot{\boldsymbol{z}}(t)=\boldsymbol{\omega}(t) \\
		& \dot{\boldsymbol{\omega}}(t)=-\beta \cdot 
		\boldsymbol{\omega}(t)-\beta \cdot V(\boldsymbol{z}(t))+ 
		\alpha \beta 
		\cdot J(\boldsymbol{z}(t)) \cdot V(\boldsymbol{z}(t)), \text 
		{ where } \beta=\frac{2}{\gamma} .
	\end{aligned}
\end{equation*}
If we denote $\dot{\boldsymbol{x}}(t)=\boldsymbol{\omega}_x(t)$ and 
$\dot{\boldsymbol{y}}(t)=\boldsymbol{\omega}_y(t)$ for MPM, then we have the following: 
\begin{equation*}
	\left[\begin{array}{c}
		\dot{\boldsymbol{x}}(t) \\
		\dot{\boldsymbol{y}}(t) \\
		\dot{\boldsymbol{\omega}}_x(t) \\
		\dot{\boldsymbol{\omega}}_y(t)
	\end{array}\right]=\underbrace{\left[\begin{array}{cccc}
			0 & 0 & \boldsymbol{I} & 0 \\
			0 & 0 & 0 & \boldsymbol{I} \\
			-\alpha \beta \boldsymbol{A} \boldsymbol{A}^{\top} & 
			-\beta 
			\boldsymbol{A} & -\beta \boldsymbol{I} & 0 \\
			\beta \boldsymbol{A}^{\top} & -\alpha \beta 
			\boldsymbol{A}^{\top} 
			\boldsymbol{A} & 0 & -\beta \boldsymbol{I}
		\end{array}\right]}_{\triangleq 
		\boldsymbol{C}_{\mathrm{MPM}}} 
		\cdot\left[\begin{array}{c}
		\boldsymbol{x}(t) \\
		\boldsymbol{y}(t) \\
		\boldsymbol{\omega}_x(t) \\
		\boldsymbol{\omega}_y(t)
	\end{array}\right] .
\end{equation*}
To determine the eigenvalues $\lambda \in \mathbb{C}$ of 
$\boldsymbol{C}_{\mathrm{MPM}}$, we do the following:
\begin{equation*}
	\begin{aligned}
		\operatorname{det}\left(\boldsymbol{C}_{\mathrm{MPM}}-\lambda 
		\boldsymbol{I}\right) & 
		=\operatorname{det}\left(\left[\begin{array}{cccc}
			-\lambda \boldsymbol{I} & 0 & \boldsymbol{I} & 0 \\
			0 & -\lambda \boldsymbol{I} & 0 & \boldsymbol{I} \\
			-\alpha \beta \boldsymbol{A} \boldsymbol{A}^{\top} & 
			-\beta 
			\boldsymbol{A} & -(\beta+\lambda) \boldsymbol{I} & 0 \\
			\beta \boldsymbol{A}^{\top} & -\alpha \beta 
			\boldsymbol{A}^{\top} 
			\boldsymbol{A} & 0 & -(\beta+\lambda) \boldsymbol{I}
		\end{array}\right]\right) \\
		& =\operatorname{det}\left(\lambda(\beta+\lambda) 
		\boldsymbol{I}-\underbrace{\left[\begin{array}{cc}
				-\alpha \beta \boldsymbol{A} \boldsymbol{A}^{\top} & 
				-\beta 
				\boldsymbol{A} \\
				\beta \boldsymbol{A}^{\top} & -\alpha \beta 
				\boldsymbol{A}^{\top} \boldsymbol{A}
			\end{array}\right]}_{\triangleq \boldsymbol{D}}\right),
	\end{aligned}
\end{equation*}
where we used $\operatorname{det}\left(\left[\begin{array}{ll}
	\boldsymbol{B}_1 & \boldsymbol{B}_2 \\
	\boldsymbol{B}_3 & \boldsymbol{B}_4
\end{array}\right]\right)=\operatorname{det}\left(\boldsymbol{B}_1 
\boldsymbol{B}_4-\boldsymbol{B}_2 \boldsymbol{B}_3\right)$.

Assume $\mu=\mu_1+\mu_2 i \in \mathbb{C}$ represents the eigenvalues of $\boldsymbol{D}$. Then, we have $\lambda(\beta+\lambda)-\mu=0$. By applying the generalized Hurwitz theorem for polynomials with complex coefficients \cite{xie1985stable}, we derive the following generalized Hurwitz array:
 
\begin{center}
\begin{tabular}{cccc}
	$\lambda^2$ &  \cellcolor[rgb]{.9,.9,.9}{
	1}  & 0 & $-\mu_1$ \\
    $\lambda^1$ & \cellcolor[rgb]{.9,.9,.9} {$\beta$} & $-\mu_2$ & 0 
    \\
    & $\mu_2$ & $-\beta \mu_1$ & 0 \\
    $\lambda^0$ & \cellcolor[rgb]{.9,.9,.9}{$-\mu_2^2-\beta^2 \mu_1$} 
    & 0 & 0.
\end{tabular}
\end{center}
In this array, the terms that change sign determine the stability of the polynomial. Notably, since $\beta>0$, the system is stable if and only if $\mu_1<-\frac{1}{\beta^2} 
\mu_2^2$.
Therefore, it is adequate to demonstrate that: 
\begin{equation} \label{eq44}
	\mathfrak{R}(\mu(\boldsymbol{z}))
	<-\frac{1}{\beta^2}\left(\mathfrak{I}(\mu(\boldsymbol{z}))\right)^2
	 .
\end{equation}

By ensuring that  the complex vector $\boldsymbol{z}$ satisfies the condition $\|\boldsymbol{z}\|_2=1 $, we can derive:
\begin{equation*}
	\begin{aligned}
		& \mu(\boldsymbol{z})=\overline{\boldsymbol{z}}^{\top} 
		\boldsymbol{D} \boldsymbol{z}=\left[\begin{array}{ll}
			\overline{\boldsymbol{x}}^{\top} & 
			\overline{\boldsymbol{y}}^{\top}
		\end{array}\right]\left[\begin{array}{cc}
			-\alpha \beta \boldsymbol{A} \boldsymbol{A}^{\top} & 
			-\beta 
			\boldsymbol{A} \\
			\beta \boldsymbol{A}^{\top} & -\alpha \beta 
			\boldsymbol{A}^{\top} 
			\boldsymbol{A}
		\end{array}\right]\left[\begin{array}{l}
			\boldsymbol{x} \\
			\boldsymbol{y}
		\end{array}\right] \\
		& =-\alpha \beta \|\boldsymbol{A}^{\top} 
		\boldsymbol{x} \|_2^2-\alpha \beta \|\boldsymbol{A} 
		\boldsymbol{y} 
		\|_2^2+\beta\left(\overline{\boldsymbol{y}}^{\boldsymbol{\top}}
		 \boldsymbol{A}^{\boldsymbol{\top}} 
		\boldsymbol{x}-\overline{\boldsymbol{x}}^{\boldsymbol{\top}} 
		\boldsymbol{A} \boldsymbol{y}\right) \\
		& =\underbrace{-\alpha 
		\beta \|\boldsymbol{A}^{\boldsymbol{\top}} 
		\boldsymbol{x} \|_2^2-\alpha \beta\|\boldsymbol{A} 
		\boldsymbol{y}\|_2^2}_{\mathfrak{R}(\mu(\boldsymbol{z}))}+\underbrace{2
		 \beta \cdot 
		\mathfrak{I}\left(\overline{\boldsymbol{x}}^{\boldsymbol{\top}}
		 \boldsymbol{A} 
		\boldsymbol{y}\right)}_{\mathfrak{I}(\mu(\boldsymbol{z}))} 
		\cdot i, 
	\end{aligned}
\end{equation*}
Considering the last equality, it stems from the fact that $\overline{\boldsymbol{x}}^{\top} \boldsymbol{A} \boldsymbol{y}$ is the complex conjugate of $\overline{\boldsymbol{y}}^{\top} 
\boldsymbol{A}^{\top} \boldsymbol{x}$. Thus, $\overline{\boldsymbol{y}}^{\top} 
\boldsymbol{A}^{\top} \boldsymbol{x}- 
\overline{\boldsymbol{x}}^{\top} \boldsymbol{A} \boldsymbol{y}=2 
\mathfrak{I}\left(\overline{\boldsymbol{x}}^{\top} \boldsymbol{A} 
\boldsymbol{y}\right) \cdot i$. Substituting this into \eqref{eq44}, we need to demonstrate:

\begin{equation*}
	-\alpha \beta\left( \|\boldsymbol{A}^{\boldsymbol{\top}} 
	\boldsymbol{x} \|_2^2+\|\boldsymbol{A} 
	\boldsymbol{y}\|_2^2\right) \leq-4 
	\mathfrak{I}^2\left(\overline{\boldsymbol{x}}^{\boldsymbol{\top}} 
	\boldsymbol{A} \boldsymbol{y}\right)
\end{equation*}
Thus, it suffices to show that:
\begin{equation*}
	\alpha \beta \left(\|\boldsymbol{A}^{\boldsymbol{\top}} 
	\boldsymbol{x} \|_2^2+\|\boldsymbol{A} \boldsymbol{y}\|_2^2 
	\right)
	\geq 4 
	\left| \left(\overline{\boldsymbol{x}}^{\boldsymbol{\top}} 
	\boldsymbol{A} \boldsymbol{y}\right) \right |^2
\end{equation*}
Hence, it is sufficient for the parameters $\alpha$ and 
$\beta$ to meet the following relationship.
\begin{equation*}
	\alpha > \frac{4}{\beta} \frac{ \left | 
	(\overline{\boldsymbol{x}}^{\boldsymbol{\top}} 
		\boldsymbol{A} \boldsymbol{y} ) \right |^2} 
		{ (\|\boldsymbol{A}^{\boldsymbol{\top}} 
		\boldsymbol{x} \|_2^2+\|\boldsymbol{A} \boldsymbol{y}\|_2^2 
		)} =2\gamma \frac{ \left|  
		(\overline{\boldsymbol{x}}^{\boldsymbol{\top}} 
		\boldsymbol{A} \boldsymbol{y} ) \right |^2} 
	{ (\|\boldsymbol{A}^{\boldsymbol{\top}} 
	\boldsymbol{x} \|_2^2+\|\boldsymbol{A} \boldsymbol{y}\|_2^2 
	)} 
\end{equation*}

On the other hand, due to
\begin{equation*}
\frac{ \left|   
	(\overline{\boldsymbol{x}}^{\boldsymbol{\top}} 
	\boldsymbol{A} \boldsymbol{y} ) \right |^2} 
{ (\|\boldsymbol{A}^{\boldsymbol{\top}} 
	\boldsymbol{x} \|_2^2+\|\boldsymbol{A} \boldsymbol{y}\|_2^2 
	)}  \leq
\frac{  \left \|   \overline{\boldsymbol{x}}^{\boldsymbol{\top}} \right \|^2 
	 \left \| \boldsymbol{A} \boldsymbol{y}   \right \|^2} 
{ (\|\boldsymbol{A}^{\boldsymbol{\top}} 
	\boldsymbol{x} \|_2^2+\|\boldsymbol{A} \boldsymbol{y}\|_2^2 
	)} \leq 
\frac{ \left \| \boldsymbol{A} \boldsymbol{y}   \right \|^2} 
{ (\|\boldsymbol{A}^{\boldsymbol{\top}} 
	\boldsymbol{x} \|_2^2+\|\boldsymbol{A} \boldsymbol{y}\|_2^2 
	)} \leq 1.
\end{equation*}
Therefore, to ensure that the conclusion of Theorem 3.4 holds, it is sufficient to require that the step size $\alpha$ and 
$\gamma$ satisfies the following conditions:
\begin{equation*} 
	\alpha > 2\gamma.  
\end{equation*}
This completes the proof.
\end{proof}

\begin{remark}
	Note that references \cite{chavdarova2021last, chavdarova2023last} require matrices $\boldsymbol{A}$ to satisfy conditions $\|\boldsymbol{A} \boldsymbol{y}\|_2=\|\boldsymbol{y}\|_2$ and $\left\|\boldsymbol{A}^{\boldsymbol{\top}} \boldsymbol{x}\right\|_2=\|\boldsymbol{x}\|_2$ when proving the convergence of EG in bilinear game \eqref{BG}. In fact, we can relax this requirement by introducing a two-time scale.
\end{remark}

\section*{Acknowledgement}
\indent
\par
This work was supported by the Major Program of National Natural Science Foundation
of China (Grant Nos. 11991020 and 11991024), the Team Project of Innovation Leading Talent in Chongqing
(Grant No. CQYC20210309536), NSFC-RGC (Hong Kong) Joint Research Program (Grant No. 12261160365),
and the Scientific and Technological Research Program of Chongqing Municipal Education Commission (Grant
No. KJQN202300528).
\bibliographystyle{unsrt}
\bibliography{reference}

\end{document}